\numberwithin{equation}{section}
\newcommand{\be}{\begin{eqnarray*}}
\newcommand{\bel}{\begin{eqnarray}}
\newcommand{\ee}{\end{eqnarray*}}
\newcommand{\eel}{\end{eqnarray}}
\newcommand{\ba}{\begin{aligned}}
\newcommand{\ea}{\end{aligned}}
\newtheorem{theorem}{Theorem}
\title[
A possible model for the origin of chemotaxis
]
{
A simple reaction-diffusion system as a possible model for the origin of chemotaxis
}
\author{Yishu Gong$^*$} \thanks{$^*$yishu.gong@duke.edu, Department of Mathematics, Duke University, Durham, NC, 27708, USA}
\author{Alexander Kiselev$^\dag$}\thanks{$^\dag$kiselev@math.duke.edu, Department of Mathematics, Duke University, Durham, NC, 27708, USA}
\date{\today}
\begin{document}

\begin{abstract}
Chemotaxis is a directed cell movement in response to external chemical stimuli. In this paper, we propose a simple model for the origin of chemotaxis - namely how a directed movement in response to an external chemical signal may occur based on purely reaction-diffusion equations reflecting inner working of the cells.
The model is inspired by the well studied role of the rho-GTPase Cdc42 regulator of cell polarity, in particular in yeast cells.
We analyze several versions of the model in order to better understand its analytic properties, and prove global regularity
in one and two dimensions. Using computer simulations, we demonstrate that in the framework of this model, at least in certain parameter regimes, the speed of the directed movement appears to be proportional to the size of the gradient of signalling chemical. This coincides with the form of the chemical drift in the most studied mean field model of chemotaxis, the Keller-Segel equation.
\end{abstract}

\maketitle

\section{Introduction}

Chemotaxis is a directed cell movement in response to external chemical stimuli.
Chemotaxis is ubiquitous in biology; for example, it plays a role in organism morphology \cite{solnica2012gastrulation,shellard2016chemotaxis}, reproduction processes \cite{himes2011sperm,ralt1994chemotaxis,riffell2007sex,zimmer2011sperm} and workings of immune system \cite{deshmane2009monocyte,taub1995monocyte}.
There are many mathematical models of chemotaxis; the most studied is the Keller-Segel equation and its variants.
Virtually all of these models incorporate a transport term driven by the concentration of the external chemical, which may be produced by the cells themselves (see e.g. \cite{keller1971model,perthame2006transport}, where further references can be found).
Yet we are not aware of any mathematical models that would aim to explain the origin of the transport based on reaction-diffusion processes taking place inside cells.
The way chemotaxis happens, at least for eukaryotic cells, is that cells translate chemical  environmental cues into amplified intracellular signaling that results in elongated cell shape, actin polymerization toward the leading edge, and movement along the gradient.
In this paper, instead of presenting chemotaxis as an explicit transport term, we explore model that aims to explain the origin the chemotactic ability of cells. Inspired by \cite{chiou2018principles}, we look at sexual yeast reproduction and simplify the polarization
process into understanding active rho-GTPase Cdc42 concentration in one yeast under chemical gradient produced by another yeast. This is certainly just an element of a more complex picture involved into producing chemotactic response in cells,
but we limit consideration to this one stage. Our first goal is to explore the well-posedness properties of the model and its variants and to understand analytic features involved. Our second goal is to get more information on the nature of transport
generated by the model reacting to external chemical stimuli. In particular a natural question is how the speed of transport, which we measure via the coordinate moment of a density, depends on the gradient of the attractive chemical.
This question we approach through numerical simulations, and find that for certain reasonable ranges of parameters, this dependence is linear.


The two-species mass-conserved activator-substrate (MCAS) system that is the basis of our model consists of two partial differential equations (PDEs) governing the kinetics of the slowly diffusing activator $u$ (GTP-bound GTPase on the membrane) and the rapidly diffusing substrate $v$ (GDP-bound GTPase in the cytoplasm). In general, this system has the following form in 1D (see \cite{chiou2018principles}):
\begin{align}\label{eqn:general_sys}
\begin{split}
    \frac{\partial u}{\partial t}&=F(u,v)+ k\Delta u, \\
    \frac{\partial v}{\partial t}&=-F(u,v) + k_v\Delta v.
\end{split}
\end{align}
Here, $k$ refers to the diffusion of $u$, $k_v$ refers to the diffusion of $v$. These two diffusion constants usually differ by two orders of magnitude. $F(u, v)$ describes the biochemical interconversions between $u$ and $v$, given in the form:
\begin{equation*}
    F(u,v)= h(u)v - g(u)u.
\end{equation*}
\begin{figure}[h]
\centering
\includegraphics[scale=0.35]{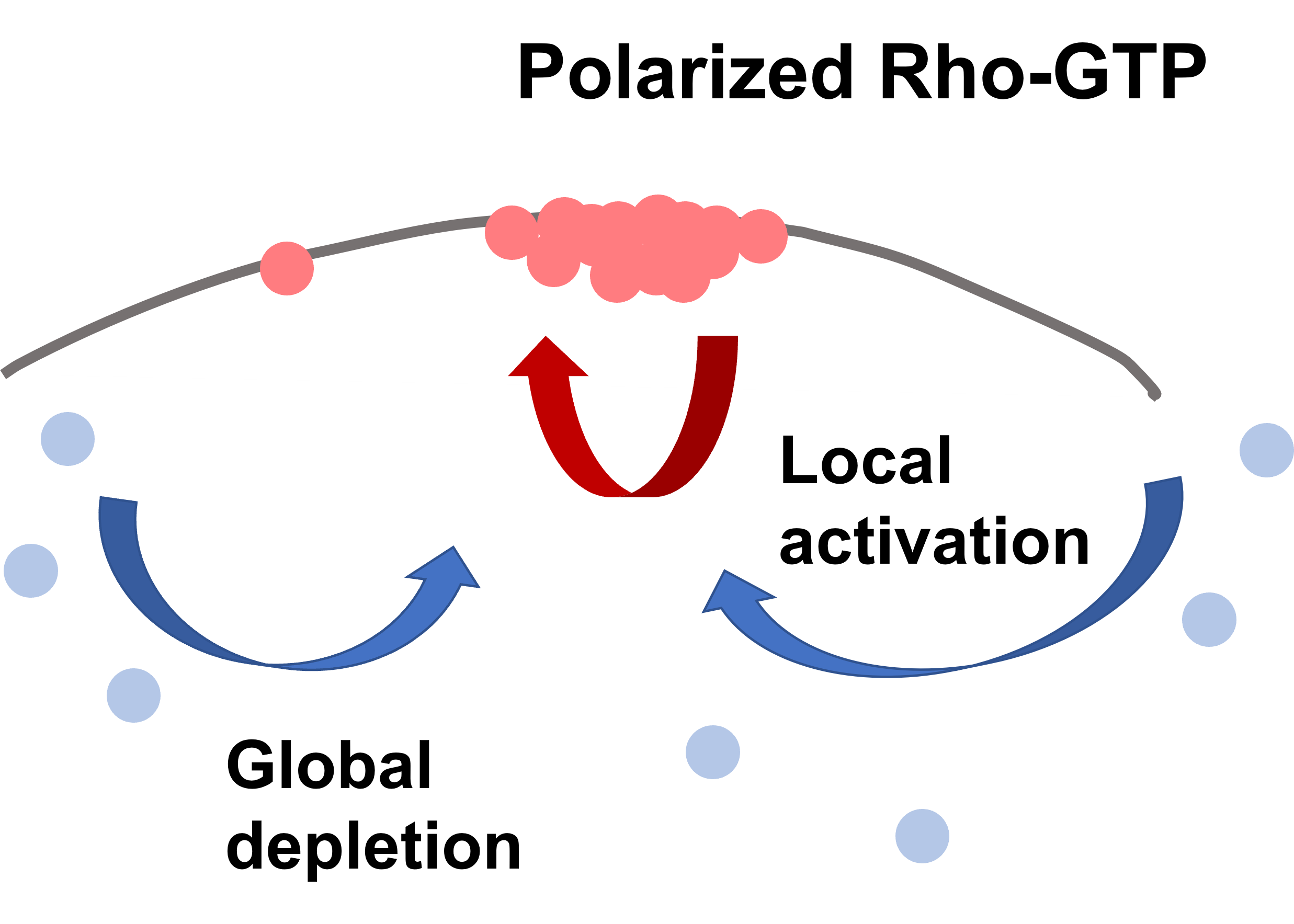}
\caption{Local activation via positive feedback and depletion of the substrate in
the cytosol generates an activator-enriched domain on the cortex \cite{chiou2018principles}.}\label{Fig:Problem_setup}
\end{figure}

The positive feedback (i.e. conversion from inactive GDPase to active GTPase with energy) is denoted by $f(u)v$, while the negative feedback (i.e. conversion from active GTPase to inactive GDPase without energy) is denoated by $g(u)u$. Examples of $F(u,v)$ includes:
the simplest
\begin{equation*}
    h(u) = \tilde{a} u^2, g(u) = b;
\end{equation*}
Goryachev’s (see \cite{goryachev2008dynamics})
\begin{equation}\label{eqn:goryachev}
    h(u) = \tilde{a}u^2 + cu, g(u) = b;
\end{equation}
and (see \cite{otsuji2007mass})
\begin{equation*}
    h(u) =1, g(u) = \frac{b}{1+u^2}.
\end{equation*}
In \cite{chiou2018principles}, Turing-type instability for these types of reaction have been analyzed; it was also shown
that steady states with more than one peak are unstable for many kinds of $F(u,v)$. This analysis is in agreement with experiments as one usually only observe one bud in yeast asexual production \cite{chiou2018principles}.

Several complicated computational models have been developed to mimic gradient-induced polarization toward the pheromone source \cite{wang2019mating, ismael2016gbeta} and have shown the rate of movement is dependent on pheromone concentration \cite{bendezu2013cdc42}.
Here, we propose a simpler system that is capable of capturing such gradient tracking ability, specifically, in the context of chemotactic reaction of a single yeast cell to an external pheromone signal.
We apply a modification to the Turing-type model described above and add a pheromone density profile $\tilde{\alpha}f(x)$ that depends on location in the form similar to \cite{wang2019mating} - and we obtain the following system:
\begin{align}\label{eqn:original_sys}
\begin{split}
    \frac{\partial u}{\partial t}&=(\tilde{a} u^2+\tilde{\alpha} f(x))v-b u + k\Delta u, \\
    \frac{\partial v}{\partial t}&=-(\tilde{a} u^2+\tilde{\alpha} f(x))v+b u + k_v\Delta v.
\end{split}
\end{align}

In \eqref{eqn:original_sys}, $\tilde{a},b,\tilde{\alpha},k,k_v$ are constants. $\tilde{a}$ is the reaction activation constant, $b$ is the reaction depletion constant, $\tilde{\alpha}$ is the overall pheromone strength, $k$ is the diffusion coefficient for $u$, and $k_v$ is the diffusion coefficient for $v$. $f(x)$ is a bounded smooth function that describes the pheromone level at different locations.

We assume that rho-GDPase diffuses infinitely fast, i.e, $k_v$ approaches $\infty$. Since the total mass of rho-GTPase and rho-GDPase is conserved, $M=\int( u(\cdot,t)+v(\cdot,t))dx$ is a constant. Then we can obtain the following equation \eqref{eqn:dim_system} that describes the activator-substrate reaction between these two substances. The setting we have is $x \in \mathbf{T}^d$ when dimension $d=1,2$, with periodic boundary condition:
\begin{equation}\label{eqn:dim_system}
    \frac{\partial u}{\partial t}=(\tilde{a} u^2+\tilde{\alpha} f(x))\frac{1}{|{\mathbf{T}^d}|}\left(M-\int_{{\mathbf{T}^d}} u dx\right)-b u + k\Delta u.
\end{equation}
In \eqref{eqn:dim_system}, $|{\mathbf{T}^d}|$ is the measure of the domain, $M$ is the total mass. We are interested in the non-negative solution $u$ with $\int u dx \leq M$ for all time.
By rescaling space and time, we can simplify the equation \eqref{eqn:dim_system} as follows:
\begin{equation}\label{eqn:full_system}
    \frac{\partial u}{\partial t}=(a u^2+\alpha f(x))\left(M-\int_{{\mathbf{T}^d}} u dx\right)-u + \Delta u.
\end{equation}
Here depletion rate and diffusion coefficient are normalized to $1$, and $\frac{1}{|{\mathbf{T}^d}|}$ gets absorbed into $\tilde{a}$ and $\tilde{\alpha}$.
\begin{figure}[htb!]
    \centering
    \includegraphics[width=0.7\linewidth]{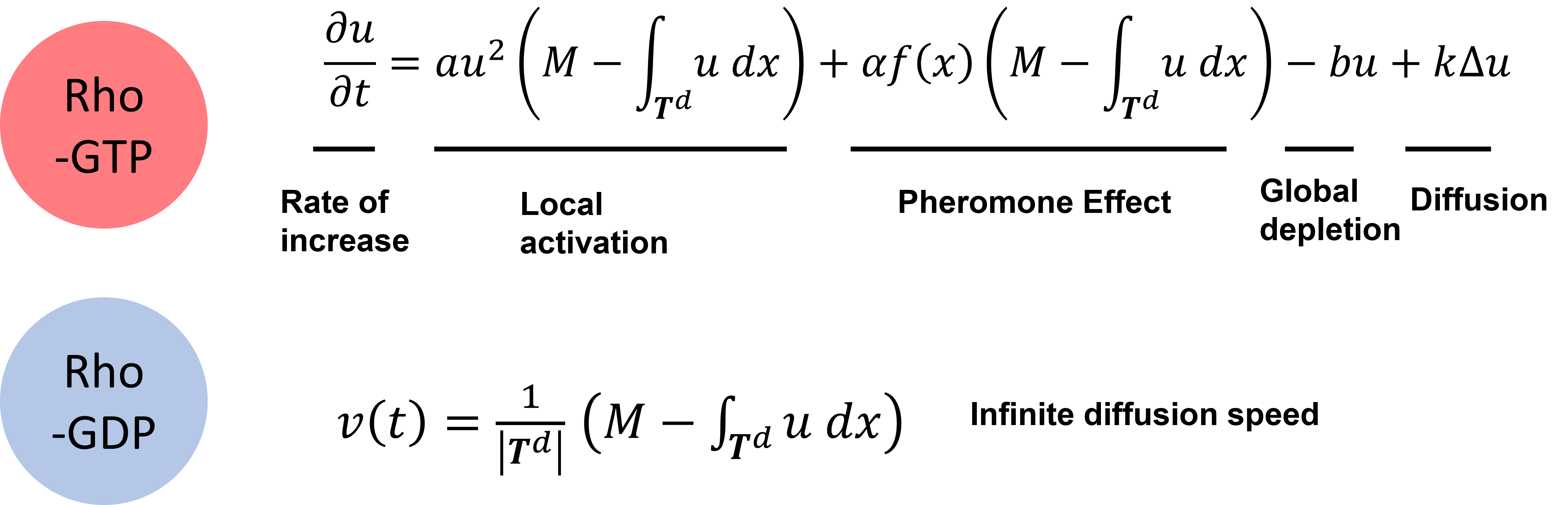}
    \caption{The interconversions of Rho-GTPases between active and inactive forms can be modeled as an reaction-diffusion equation governing the dynamics of the slowly-diffusing activator $u$ and the infinitely-diffusing substrate $v$.}
    \label{fig:phi}
\end{figure}

Our main results are as follows.
On the rigorous level, we are able to establish global regularity results for equation \eqref{eqn:full_system} in one and two dimensions for all non negative initial data.
To better understand the structure of the equation, we consider \eqref{eqn:full_system} in the absence of regularizing diffusion, and prove that for all finite $0\leq t\leq T\leq\infty$, the profile for active rho-GTPase $u$ stays smooth even when there is no diffusion term.
When we re-introduce diffusion in one dimension, uniform in time global bound on derivatives of $u$ is shown. With diffusion in two dimension, global regularity with possible growth is proved. In our numerical experiments, we observe the profile of active rho-GTPase $u$ move towards higher concentration of pheromone, and stops moving once it reaches the location with maximum pheromone concentration. In addition, we explore the speed of such movement through tracking the center of mass of rho-GTPase profile. If pheromone concentration is linear, the center of mass moves with a constant speed towards the pheromone peak. 
More importantly, we verify that the movement speed depends linearly on the pheromone gradient in a natural parameter range similar to that used in \cite{chiou2018principles}.
Note that such linear dependence of chemotactic drift on the gradient of the attractive chemical density $f(x)$ is a common feature of chemotaxis models, including the most studied Keller-Segel equation which in its simplest form reads (see e.g. \cite{perthame2006transport})
\begin{equation}\label{KS} \partial \rho -\Delta \rho + \alpha \nabla \cdot (\rho \nabla f) =0. \end{equation}
The emergence of transport mean field equations such as \eqref{KS} from kinetic equations has been extensively studied (see e.g. \cite{othmer2000diffusion,othmer2002diffusion,james2013chemotaxis,perthame2018flux}). However the existence of chemotactic transport is already built in on the kinetic level.
As far as we know, the equation \eqref{eqn:full_system} is the first simple reaction-diffusion model that aims to rigorously analyze the emergence of chemotaxis from the inner cell workings, even if it is focused on just one stage
of the process that can be quite complex.


The paper is organized as follows: in the next section, we introduce the general set up and key parameters of the model in more detail and present our numerical scheme. We then proceed to describe results of the numerical experiments. After this we state the rigorous results that we are able to prove, and proceed with the proofs.



\section{General Set Up and Numerical Scheme}

We want to explore the origin of chemotactic ability of cells with simulation in $1D$ using the following equation (dropping the scripts in \eqref{eqn:dim_system} and denote the total mass of $u(x,t)$ as $U(t):=\int_{{\mathbf{T}^d}} u(x,t)dx$):
\begin{equation}\label{eqn:full}
    \frac{\partial u}{\partial t}=({a} u^2+{\alpha} f(x))\frac{1}{|\mathbf{T}^d|}\left(M-U(t)\right)-b u + k\Delta u.
\end{equation}

The parameters we use are the same as in \cite{chiou2018principles} and are shown in Table \ref{Table_1} with some basic conversions. We used the method of lines to turn spatially discretized PDE into a system of ODEs, then we use a robust ODE solver ODE15s in Matlab to solve. Note that since we assume rho-GDPase $v$ is rapidly diffusing, we use Simpson's method to numerically integrate rho-GTPase to obtain $\int u(x, t)dx$, and calculate rho-GDPase  as follows:
\begin{equation}
    v(t) = \frac{1}{|\mathbf{T}^d|}\left(M-U(t)\right).
\end{equation}
For the computational part, we restrict ourselves to one dimensional surface and assume the pheromone profile is generated by another yeast cell.
{\color{black} If this cell is some distance away, one  reasonable model of the two-dimensional pheromone distribution is a solution to the heat equation $\partial_t \omega -\Delta \omega =0$ with a $\delta$ function initial data, that is, as a fundamental solution of 2D heat equation.}
Then we can derive the pheromone profile on the cell boundary as shown in Figure \ref{fig:Pheromone_derivation}, and in general, it has the form:
\begin{equation}
    f_h(x) = \frac{\beta}{4\pi \gamma t}\left(\text{exp}\left(-\frac{1}{4 \gamma t} \left( L^2 + r^2 -2Lr\text{cos}\frac{x-x_\text{peak}}{r}\right) \right)\right),
\end{equation}
where $\phi = \frac{x-x_\text{peak}}{r}$ and $\phi\in[-\pi,\pi)$. We use $\gamma$ to denote the diffusion coefficient for the source, $\beta$ as the response strength to the source, and without loss of generality, we assume $t=1$.
\begin{figure}[htb!]
    \centering
    \includegraphics[width=0.6\linewidth]{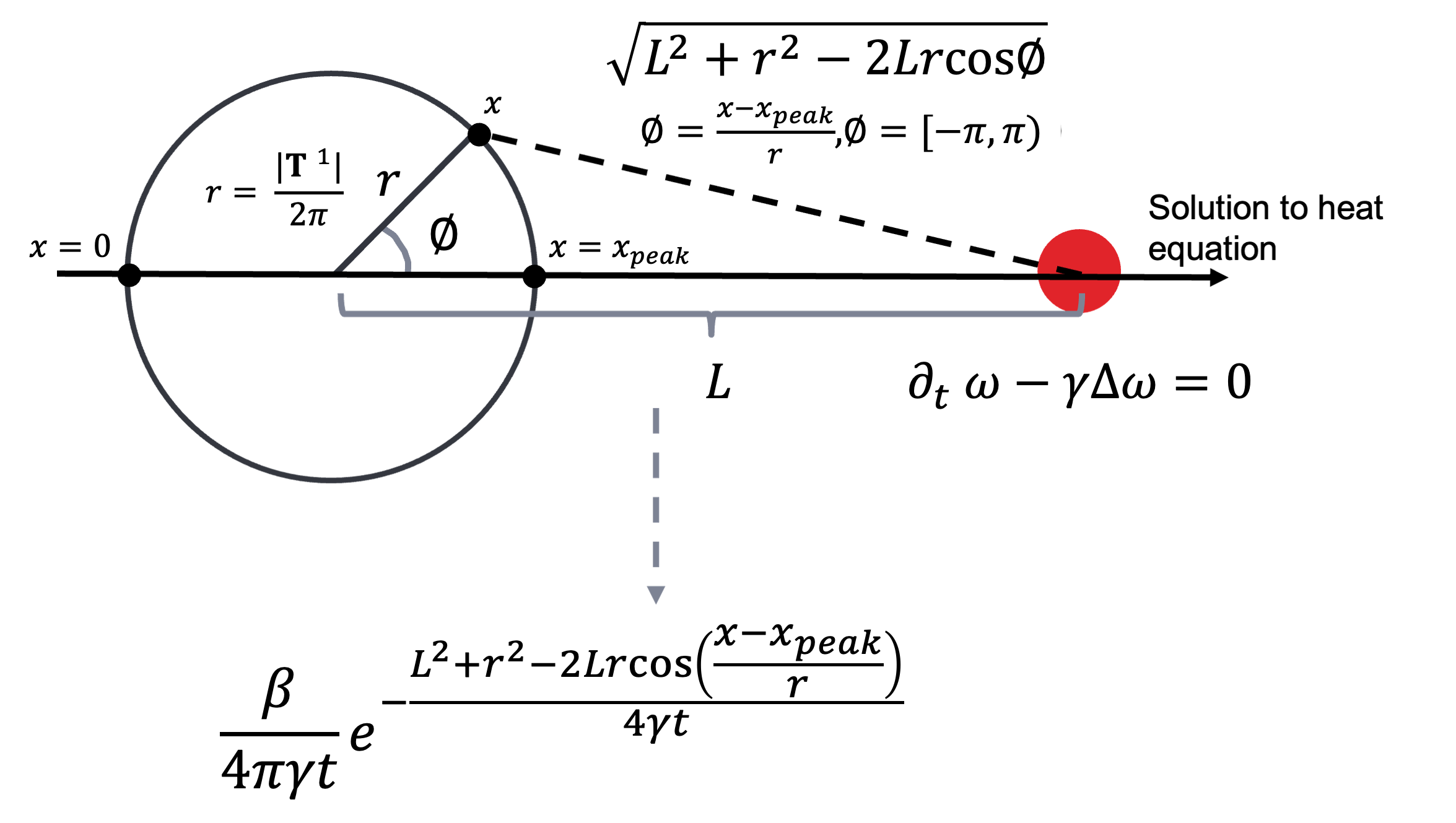}
    \caption{Derivation of a pheromone profile generated from a heat equation.}
    \label{fig:Pheromone_derivation}
\end{figure}
With $\gamma=10$, $t=1$, $L=10$, $\beta=1500$, we can plot $f_h(x)$ in Figure. \ref{fig:Pheromone}. We also plot a linear pheromone profile $f(x)$ that is similar to $f_h(x)$ with a peak at $x_\text{peak}$ defined below in Figure \ref{fig:Pheromone} as well.
{\color{black} The reason for this choice will be explained later.}
\begin{equation}
f(x)=
    \begin{cases}
        \frac{2}{5}x\hspace{1mm} \text{, if } 0\leq x\leq 5, \\
        4- \frac{2}{5}x\hspace{1mm} \text{, if } 5\leq x\leq 10.
    \end{cases}
\end{equation}

\begin{figure}[htb!]
    \centering
    \includegraphics[width=0.6\linewidth]{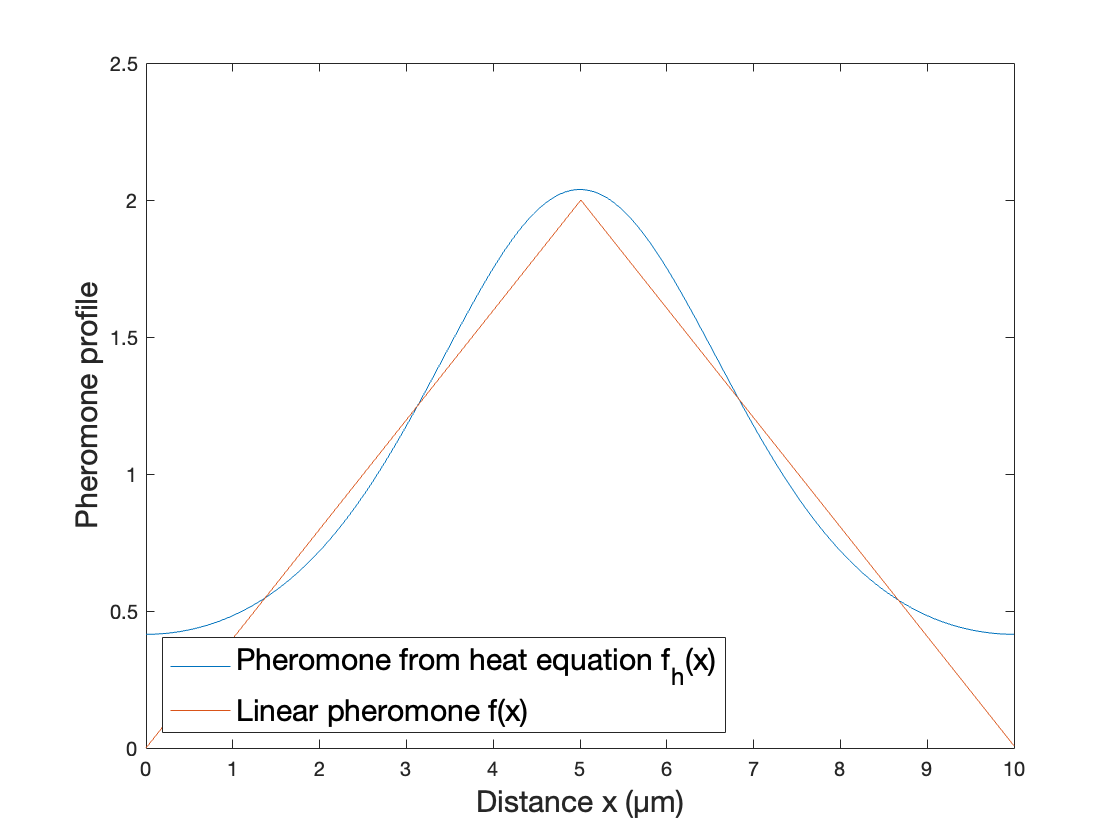}
    \caption{Pheromone profile generated from a heat equation $f_h(x)$ and a similar linear pheromone profile $f(x)$.}
    \label{fig:Pheromone}
\end{figure}

\begin{table}[ht]
\begin{tabular}{llll}
Parameter   & Description &   Value/Range in Simulation  & Unit\\
$k$   & Diffusion coefficient for $u$   & $0.01$  & $(\mu m)^2(s^{-1})$ \\
$\alpha$   & Pheromone strength       & $0.5-3$       & $s^{-1}$ \\
$|\mathbf{T}^d|$    & Cell size    &  10       &     $\mu m$            \\
$M$   & Total mass   & $10$ & -\\
$a$ & Strength of activation & $ 1$ & $(\mu m)^2$\\
$b$ &  Strength of substrate & $ 1$ & $s^{-1}$
\end{tabular}
\caption{Parameters from \cite{chiou2018principles}}\label{Table_1}
\end{table}

To obtain the initial profile of $u$, we start with a uniformly distributed $v$ and a bump function $u$, and we run the simulation without pheromone until it stabilizes according to
\begin{equation}\label{eqn:initial}
    \frac{\partial u}{\partial t}={a} u^2\frac{1}{|{\mathbf{T}^d}|}\left(M-U(t)\right)-b u + k\Delta u.
\end{equation}
Switching on the pheromone, we track the movement using center of mass since the profile of $u$ is relatively stable over time. The center of mass as a function of time is defined using:
\begin{equation}\label{eqn:com}
    \text{CM}_u(t) = \frac{\int x u(x, t)dx}{U(t)}.
\end{equation}
We also measure the movement of the profile of $u$ with the time derivative of the center of mass: $\frac{dCM_u(t)}{dt}$.


\section{Numerical Results: Pheromone Induced Movement}

While there is no explicit transport term in \eqref{eqn:full}, we observe movement of the rho-GTPase $u$ profile over time in response to ``reallocation of resources" to more favorable reaction
regions induced by pheromone $\alpha f_h(x)$ and $\alpha f(x)$ as shown in Figure \ref{fig:different_pheromone}. {\color{black} As our numerical simulations show, at least in certain parameter ranges,
this transport appears quite similar to the Keller-Segel-type transport with speed proportional to the concentration gradient.}

\begin{figure}[htb!]
\centering
\begin{subfigure}{.5\textwidth}
  \centering
  \includegraphics[width=0.95\linewidth]{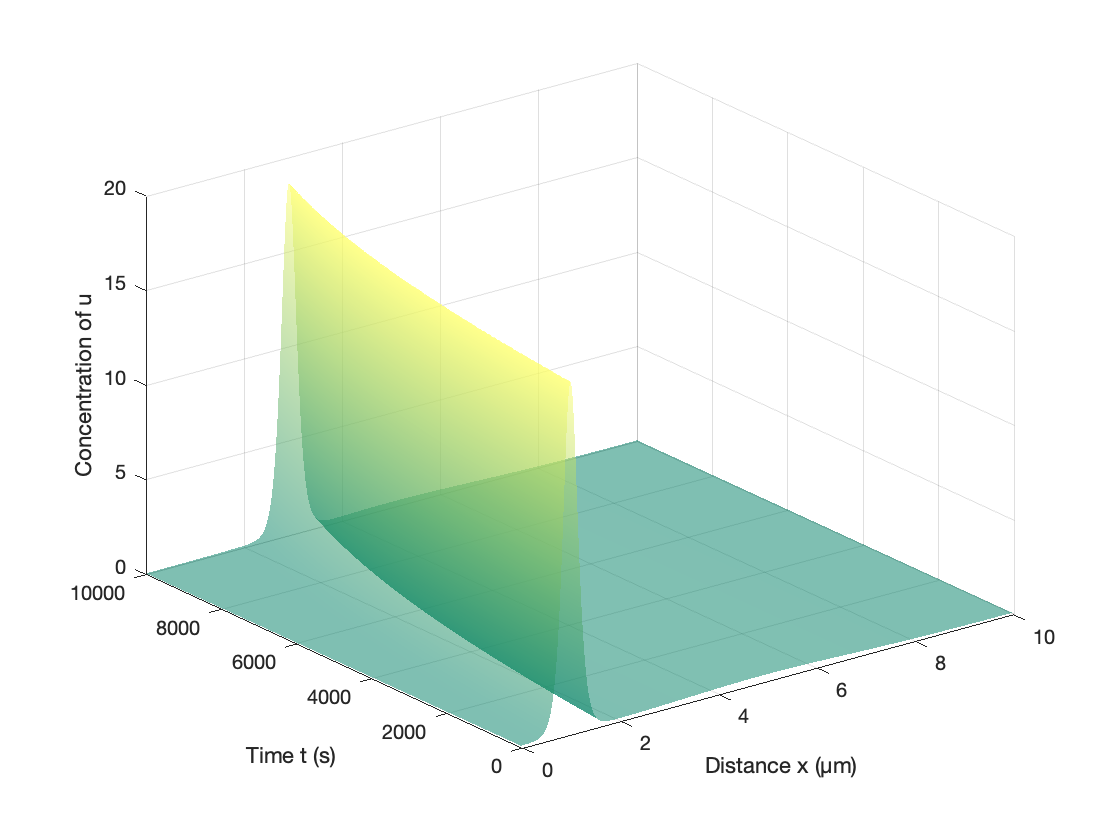}
  \caption{Pheromone profile given by $f_h(x)$.}
  \label{fig:sub1}
\end{subfigure}%
\begin{subfigure}{.5\textwidth}
  \centering
  \includegraphics[width=0.95\linewidth]{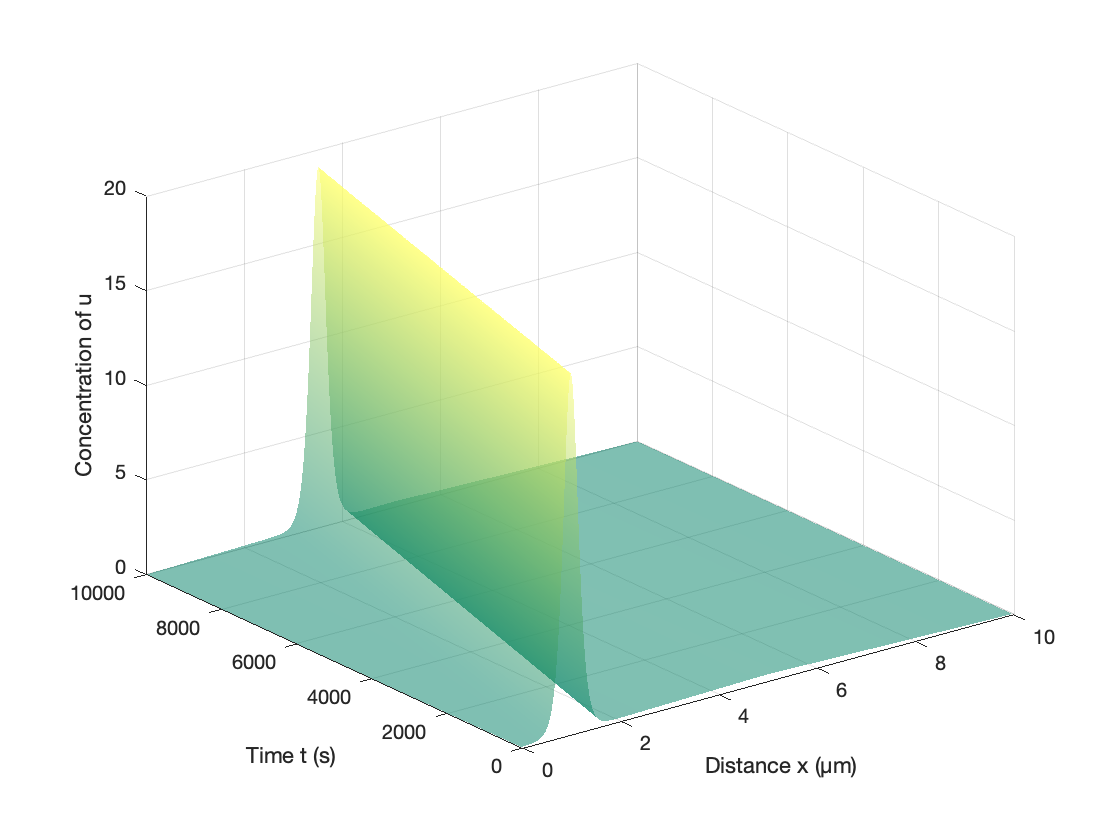}
  \caption{Pheromone profile given by $f(x)$.}
  \label{fig:sub2}
\end{subfigure}
\caption{Numeric solution to equation \eqref{eqn:full} with $\alpha$ = 2 and other parameters given in Table \ref{Table_1}.}
\label{fig:different_pheromone}
\end{figure}

As we can see in Figure \ref{fig:different_pheromone}, one does not expect that the solution will be an exact traveling pulse since the background level of the pheromone affects the shape of the bump of $u$ density.
With time $t$ and recorded $CM_u(t)$, we can compute the movement speed of the center of mass, $\frac{d CM_u(t)}{dt}$ and the corresponding pheromone profile $f_h$ and $f$ at the center of mass. From Figures \ref{fig:nonlinear_relationship} and \ref{fig:linear_relationship}, we propose the hypothesis that the movement speed $\frac{d CM_u(t)}{dt}$ depends on the derivative of the pheromone.
\begin{figure}[htb!]
    \centering
    \includegraphics[width=0.9\linewidth]{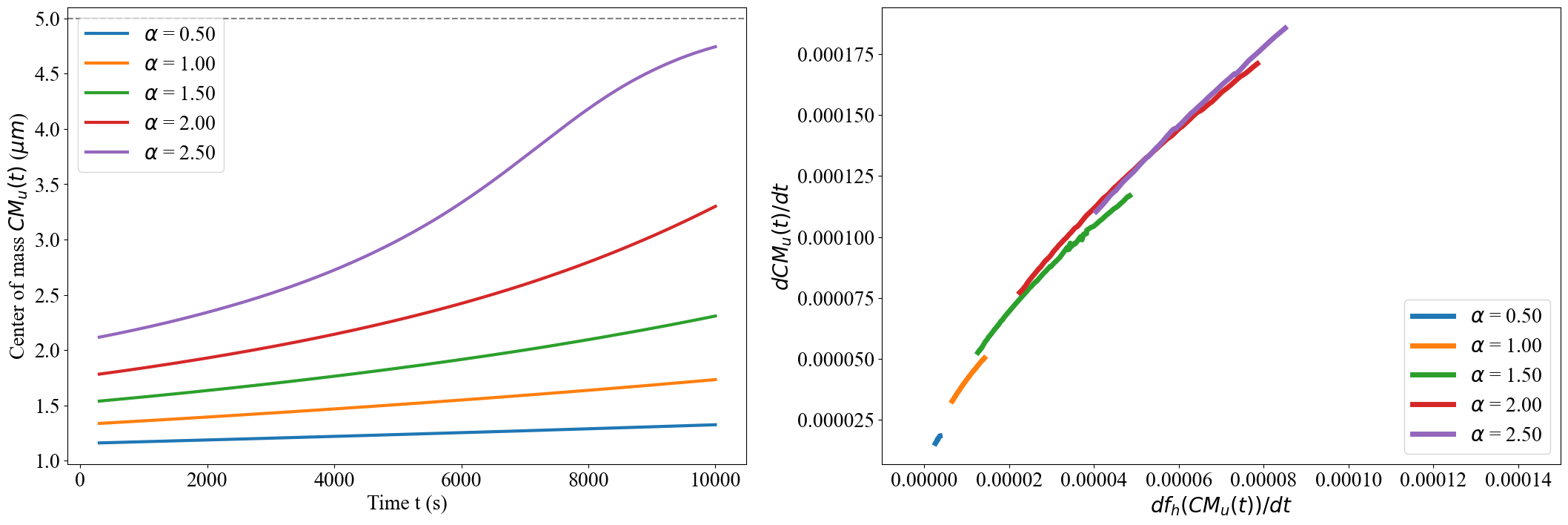}
    \caption{Movement speed $\frac{\text{CM}_u(t)}{dt}$ as a function of pheromone profile slope $\frac{f(_h\text{CM}_u(t))}{dt}$. $\frac{df_h(x)}{dx}>0$ for $x\in(0,2.5).$}
    \label{fig:nonlinear_relationship}
\end{figure}
\begin{figure}[htb!]
    \centering
    \includegraphics[width=0.9\linewidth]{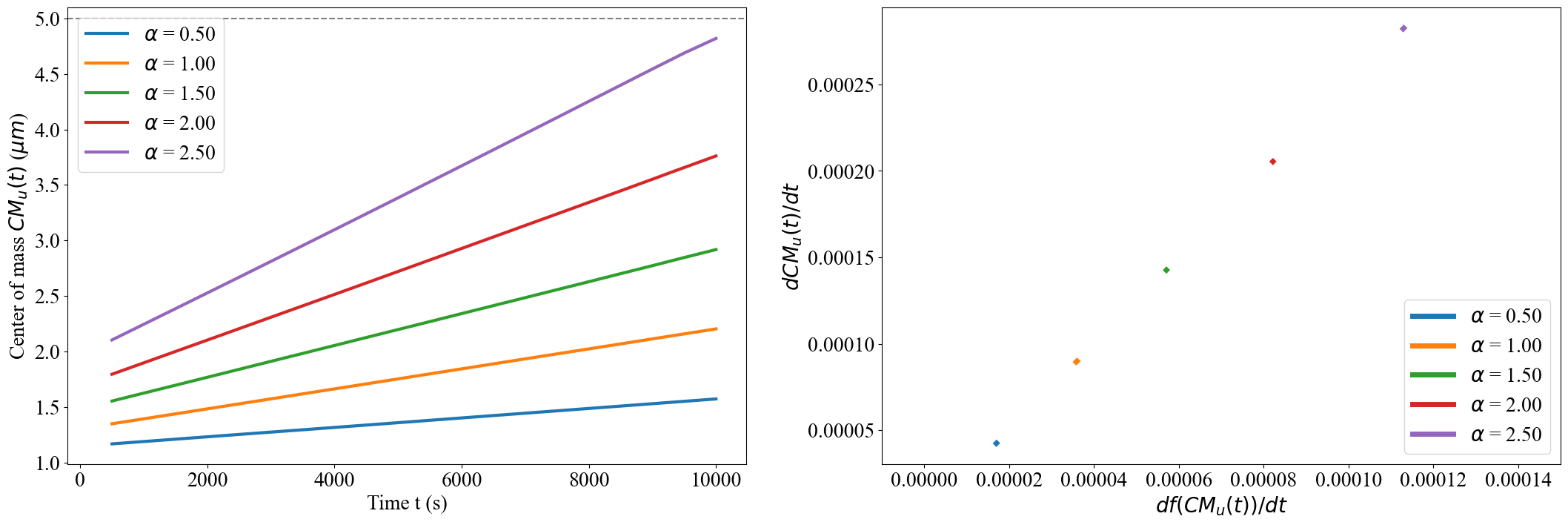}
    \caption{Movement speed $\frac{\text{CM}_u(t)}{dt}$ as a function of pheromone profile slope $\frac{f(\text{CM}_u(t))}{dt}$.  $\frac{df(x)}{dx}$ is a constant for $x\in(0,2.5).$}
    \label{fig:linear_relationship}
\end{figure}

{\color{black} We have tried some other pheromone profiles with similar results. While the graphs on \ref{fig:linear_relationship} appear close to lines, they are not quite lines - but perhaps because the
density bump of $u$ has spatial scale, and so is exposed to a range of concentration slopes (we take the slope at the center of mass as the basis of the functional relationship pictured in this figure).
As one of our goals is to study the dependence of the movement speed of the center of mass $CM_u(t)$ and on the gradient of the pheromone concentration $\alpha f'(x)$, we will also consider the piecewise linear pheromone profile $f(x):$ it has extended regions with
the constant slope that makes it easier to capture the effect more precisely.}
We illustrate this transport picture in Figure \ref{fig:traj2} by  calculating the center of mass using \eqref{eqn:com} of the initial profile of $u$ and the profile at $t=10000s$ pheromone profile $f(x)$ with pheromone strength $\alpha=2$.

As one can expect, the profile of $u$ slows down once its center of mass starts to approach $x=x_{\text{peak}}=5\mu m$.
We can plot the center of mass a function as a time of time for different pheromone strength $\alpha$.
As presented in Figure \ref{fig:com}, the center of mass of $u$ stays at $x=x_{\text{peak}}=5\mu m$ after $t=7000s$ for the pheromone strength $\alpha=3.$
In fact, if we run the simulation long enough, the center of mass of $u$ appears to get arbitrarily close to $x=x_{\text{peak}}=5\mu m$ for all $\alpha >0$.
\begin{figure}[hbt!]\label{Fig:chemotaxis_cutoff}
    \centering
     \hspace{-4em}
    \begin{subfigure}[b]{0.5\textwidth}
        \centering
        \includegraphics[width=0.95\linewidth]{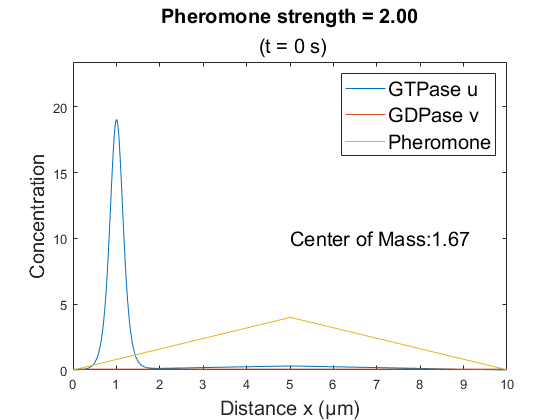}
        \caption{Initial profile $t = 0 s$.}
        \label{Fig:plateau effect}
    \end{subfigure}%
    \hspace{-2em}%
    \begin{subfigure}[b]{0.5\textwidth}
        \centering        \includegraphics[width=0.95\linewidth]{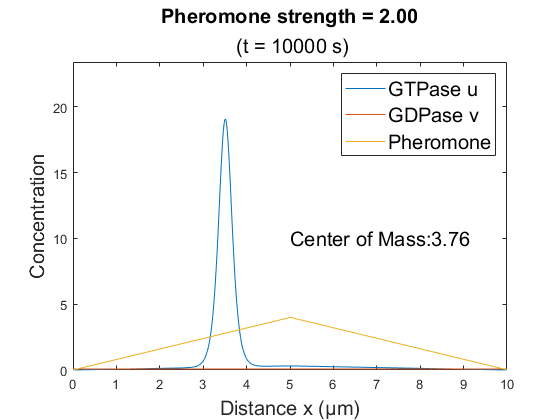}
        \caption{Profile when $t = 10000 s$.}
        \label{Fig:optimal_shear}

    \end{subfigure}
     \hspace{-4em}
    \caption{Initial profile for rho-GTPase $u$, rho-GDPase $v$, and pheromone profile $f(x)$ and pheromone strength $\alpha = 2.$}
    \label{fig:traj2}
\end{figure}

We then continue to explore the movement speed of the center of mass as a function of pheromone strength $\alpha,$ {\color{black} which controls the slope of the pheromone concentration}. From Figure \ref{fig:com}, we can see a constant movement speed of the center of mass when the profile of $u$ is far away from $x_\text{peak}$. Moreover, if we plot the movement speed (before the profile of $u$ is too close to $x_\text{peak}$) as a function of $\alpha$ as shown in Figure \ref{fig:speed}, we observe linear dependence of transport speed on pheromone strength. Such linear dependence corresponds to the mean field chemotaxis models in  \cite{keller1971model,perthame2006transport}.

\begin{figure}[htb!]
    \centering
    \includegraphics[width=0.6\linewidth]{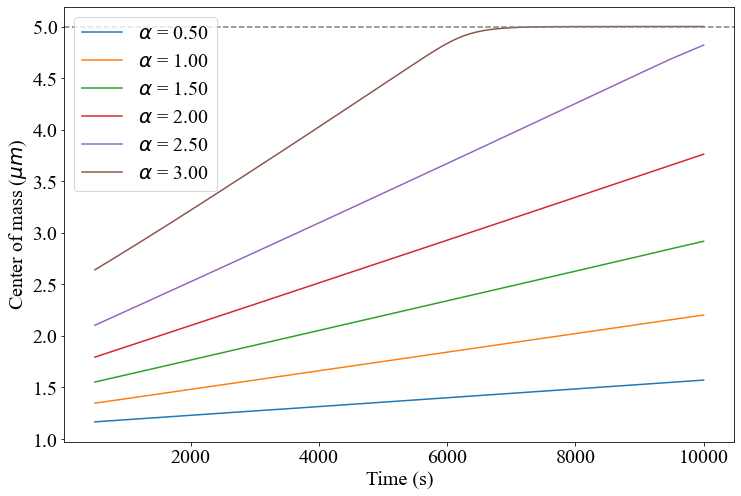}
    \caption{Center of mass position $\text{CM}_u(t)$ as a function of time with pheromone profile $f(x)$. In regions away from $x_\text{peak}$, $\text{CM}_u(t)$ changes linearly with time.}
    \label{fig:com}
\end{figure}

\begin{figure}[htb!]
\includegraphics[width=0.6\linewidth]{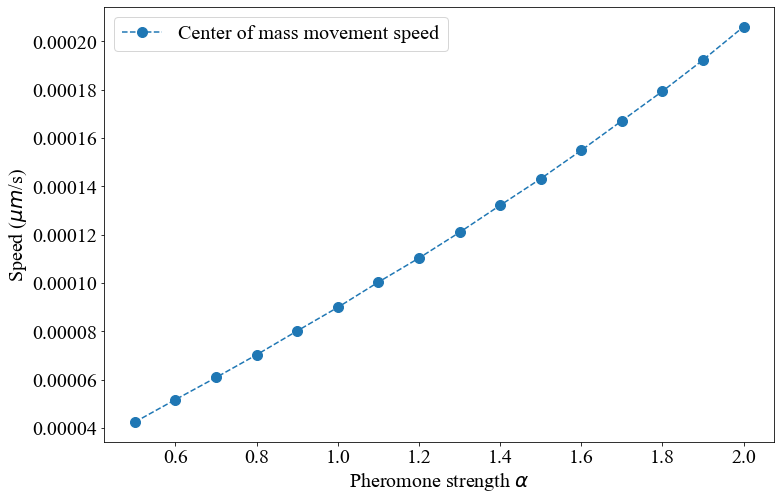}
 \centering
 \caption{Center of mass movement speed $\frac{d\text{CM}_u(t)}{dt}$ as a function of pheromone strength $\alpha$ with pheromone profile $f(x)$. In regions away from $x_\text{peak}$, $\frac{d\text{CM}_u(t)}{dt}$ changes linearly with $\alpha$.}
 \label{fig:speed}
\end{figure}

\section{Mathematical Analysis: Global Regularity}
Our goal in this section is to initiate rigorous mathematical analysis of the equation \eqref{eqn:full_system}. There is much literature on regularity of solutions for reaction-diffusion type equations, but we could not locate references dealing directly with \eqref{eqn:full_system} due to nonlocality produced by our assumption of infinite diffusion for $v.$
In \cite{bian2019global}, the author presented global existence result to a similar equation for sufficiently small and non-negative initial data. In this section, we will present regularity result for \eqref{eqn:full_system} in $\mathbf{T}^d$, with $d=1,2$ for any non-negative smooth initial data with diffusion and without diffusion. When there is no diffusion with $d=1,2$, we can still show that $u$ is smooth for all times $0\leq t< \infty$. With diffusion in 1D, uniform in time global bound is proven while with diffusion in 2D, global regularity with possible growth is shown.
{\color{black} At this time, we are unable to prove rigorous results that would provide more qualitative features of the evolution, and in particular establish the conjecture on linear dependence of the transport speed on pheromone gradient at least in some regimes.
Part of the difficulty is that, as we mentioned before, the solution cannot be expected to take form of a traveling pulse of the fixed shape on an unbounded domain, and lack of such framework complicates analysis.
This paper can be viewed as creating a foundation for such further investigation that remains an intriguing open problem. }

\subsection{Without diffusion}
 We start with the case without diffusion. Consider the following equation:
  \begin{equation}\label{eqn:no_diffusion}
    \frac{\partial u}{\partial t}=(a u^2+\alpha f(x))\left(M-U(t)\right)-u, U(t)=\int_{\mathbf{T}^d} u dx.
\end{equation}
\theorem Suppose $u(t,x)$ is a non-negative solution to \eqref{eqn:no_diffusion} with dimension $d=1,2$ and periodic boundary condition; $a,\alpha,M$ are constant parameters, and $f(x)$ is a smooth non-negative function. If $u_0(x)$ is a smooth initial profile, then $u(t,x)$ stays smooth for all times {\color{black} $0\leq t <\infty.$ }
\begin{proof}
We first show some a-priori bounds on $u$, namely that all Sobolev norms of $u$ are controlled by $L^\infty$ norm. It is clear that on a bounded domain, $L^\infty$ norm controls all other $L^p$ norms. In the estimates that follow, $D$ stands for any partial derivative (just $\partial_x$ in one dimension),
and $W^{k,p}$ is the usual Sobolev space. Multiplying \eqref{eqn:no_diffusion} by $(-\Delta)^s u$ and integrating, we get
\begin{equation}\label{eqn:no_diff_hs}
    \partial_t ||u||_{H^s}^2 \leq C\underbrace{|\int u^2 \cdot (-\Delta)^s udx |}_{I}-\underbrace{\int u (-\Delta)^s u dx}_{||u||_{H^s}^2}+\alpha M |\int f (-\Delta)^s u dx|.
\end{equation}
The last term can by controlled by $\alpha M ||f||_{W^{2s,1}}||u||_{L^\infty}$ (moving all derivatives to $f$). Term $I$ can be represented by a sum of integrals of the type $\int D^l u D^{s-l} u D^s u dx $, where $l=0,\dots, s.$ Then with H{\"o}lder's inequality and Gagliardo–Nirenberg interpolation inequality in dimension $d=1,2$, we can bound them by:
\begin{equation*}
        \int D^l u D^{s-l} u D^s u dx  \leq C ||D^l u||_p ||D^{s-1}u||_q||D^s u||_2 \hspace{1mm}\text{with} \hspace{1mm} \frac{1}{p}+\frac{1}{q}+\frac{1}{2}=1;
\end{equation*}
\begin{equation*}
    ||D^l u||_p \leq C ||D^s u||_{2}^{\alpha}||u||_\infty^{1-\alpha};
\end{equation*}
\begin{equation*}
    ||D^{s-l} u||_q \leq C ||D^s u||_{2}^{\beta}||u||_\infty^{1-\beta}.
\end{equation*}
In $1D$, $\alpha=\frac{2(-1+lp)}{p(-1+2s)}$ and $\beta=\frac{2(-1+(s-l))q}{q(-1+2s)}$, and $\alpha+\beta=2(\frac{1}{p}+\frac{1}{q})=1$.
\newline
In $2D$, $\alpha=\frac{-2+lp}{p(-1+s)}$ and $\beta=\frac{-2+(s-l)q}{q(-1+s)}$, and $\alpha+\beta=2(\frac{1}{p}+\frac{1}{q})=1$.
Then
\begin{equation*}
    \int D^l u D^{s-l} u D^s u dx  \leq C ||D^l u||_p ||D^{s-1}u||_q||D^s u||_2\leq C||u||_{H^s}^2||u||_\infty.
\end{equation*}
Substituting back into \eqref{eqn:no_diff_hs} gives:
\begin{equation*}
    \partial_t ||u||_{H^s}^2 \leq (C||u||_\infty-1)||u||_{H^s}^2+\alpha M ||f||_{W^{2s,1}}||u||_{L^\infty}.
\end{equation*}
Applying Gronwall inequality \cite{gronwall1919note}, we see that to show global regularity, it suffices to prove that
 $\int_0^T ||u(\cdot, t)||_\infty\,dt$ remains bounded. We will show a stronger constraint that $\|u(\cdot, t)\|_\infty$
 remains finite for all times. Via contradiction, denote $T$ the first time of blow up of $||u||_\infty$.
\newline
Consider $\rho=\frac{e^{-t}}{1+u}$, then $\rho$ satisfies the following equation:
\begin{equation}\label{eqn:rho}
    \frac{\partial \rho}{\partial t}=-(a e^{-t}-2a\rho +a\rho^2 e^t+\alpha f(x)\rho^2e^t)\left(M-U(t)\right)-\rho^2e^t.
\end{equation}
Then at time $T$, $\rho$ will reach $0$ at some point. 
For simplicity, let us focus on $d=1$; the argument for $d=2$ is similar. From \eqref{eqn:rho}, we see that $\rho$ is decreasing for all times, therefore $\rho$ is bounded from above by $||\rho_0||_\infty$. Now we take a derivative with respect to $x$ to obtain:
\begin{equation*}
\partial_x\partial_t \rho = -\left(M-U(t)\right)\left(-2a\partial_x \rho+2ae^t \rho\partial_x \rho +\alpha\rho^2 e^t \partial_x f+2\alpha\rho e^t f(x)\partial_x\rho\right) -e^t\rho\partial_x\rho.
\end{equation*}
Then it is clear that for all $x,$
\begin{equation*}
    \partial_t|\partial_x \rho|\leq C_0(T)+C_1(T)|\partial_x \rho|.
\end{equation*}
By Gronwall inequality, we can see that $| \partial_x \rho |$ is bounded for any finite time $t \leq T$.
We can take another derivative in $x$ and obtain:
\begin{equation*}
\begin{split}
\partial_{xx}\partial_t \rho =& -\left(M-U(t)\right)(-2a\partial_{xx} \rho+2ae^t \rho\partial_{xx} \rho +2ae^t(\partial_x \rho)^2+\alpha\rho^2  e^t \partial_{xx} f + \alpha\rho e^t \partial_x f \partial_x\rho \\
&+2\alpha\rho e^t f(x)\partial_{xx}\rho+2\alpha e^t f(x)(\partial_x\rho)^2+2\alpha\rho e^t \partial_x f\partial_x\rho) -e^t\rho\partial_{xx}\rho-e^t(\partial_x\rho)^2.
\end{split}
\end{equation*}
Boundedness of $\rho$ along with control of $|\partial_{x}\rho|$ give:
\begin{equation*}
    \partial_t|\partial_{xx}^2 \rho|\leq C_2(T)+C_3(T)|\partial_{xx} \rho|.
\end{equation*}
By Gronwall inequality again, one gets that $|\partial_{xx}^2\rho|$ is bounded for any finite time $t \leq T$.
One can effectively continue this calculation and get that all derivatives in space are bounded for $t\leq T$.
Since blow up happens for the first time at time $T$, then $\rho(x_B,T)=0$ at some point $x_B$. There can be many such points, but let us focus on one of them. Due to control of $\partial_{x}|\rho|$ and $\partial_{xx}^2|\rho|$, we have $\rho(x,T)\leq C(T)|x-x_B|^2$ by Taylor expansion.  Observe that $\rho(x,t)\rightarrow\rho(x,T)$ monotonically for every $x$. Therefore, as $u=\frac{1}{e^t \rho}-1$, we have $u(x,t)\rightarrow u(x,T)$ (including when $u(x,t)=\infty$). Then we have $u(x,T)=\frac{1}{e^T \rho} -1 \geq \frac{1}{e^T |x-x_B|^2}-1$.
Then by Fatou's lemma, we have
\begin{equation}
    M\geq\liminf_{t->T}\int u(x,t)dx\geq  \int\liminf_{t->T} u(x,t)dx=\int u(x,T)dx \geq \int C|x-x_B|^{-2}dx=\infty,
\end{equation}
which is a contradiction. Therefore, we cannot have such finite time blow up. Note that the argument above works both in 1D and 2D, only the computation yielding control of the derivatives of $\rho$ needs a minor adjustment.
{\color{black}  Note that the size of the Sobolev norms of the solution may depend on time.}
\end{proof}

\normalfont
\subsection{With diffusion in 1D}
Now we turn out attention to the system with diffusion in $1D$: \begin{equation}\label{eqn:diff_1d}
\frac{\partial u}{\partial t}=(au^2+\alpha f(x))\left(M-U(t)\right)-u + \frac{\partial ^2 u}{\partial x^2}, U(t)=\int_{\mathbf{T}^d} u dx.
\end{equation}
We will prove global regularity for \eqref{eqn:diff_1d} as well.

\begin{theorem} Suppose $u$ is a non-negative solution to \eqref{eqn:diff_1d} in dimension $d=1$ and periodic boundary condition. Let $a,\alpha,M$ be constant parameters, and $f(x)$ a smooth function. If $u_0(x)$ is a smooth initial profile, then $u(x,t)$ stays smooth for all time;
in particular, all Sobolev norms $||u||_{H^s}$ with $s>0$ are bounded uniformly for all time.
\end{theorem}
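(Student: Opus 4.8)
The plan is to reduce global regularity to a single uniform-in-time $L^2$ bound, which in one dimension closes because the reaction term scales subcritically against the diffusion, and then to bootstrap to all Sobolev norms using the dissipation. Throughout I write $U(t)=\int_{\mathbf T}u\,dx$ and set $V(t):=M-U(t)$. The first step records the a priori bounds that tame the nonlinearity. Since $u\ge 0$ we have $\|u\|_{L^1}=U$, and integrating \eqref{eqn:diff_1d} gives $\frac{dU}{dt}=V\big(a\int u^2\,dx+\alpha\int f\,dx\big)-U$; at $U=M$ this equals $-M<0$, so $0\le U(t)\le M$ is preserved and hence $0\le V(t)\le M$ and $\|u\|_{L^1}\le M$ for all time. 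The crucial point is that both $\|u\|_{L^1}$ and $V$ stay bounded by $M$ \emph{uniformly} in time.

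The crux is the $L^2$ estimate. Multiplying \eqref{eqn:diff_1d} by $u$ and integrating by parts yields
\begin{equation*}
\tfrac12\tfrac{d}{dt}\|u\|_{L^2}^2=aV\!\int u^3\,dx+\alpha V\!\int uf\,dx-\|u\|_{L^2}^2-\|u_x\|_{L^2}^2 .
\end{equation*}
The only dangerous term is $aV\int u^3$. Here I would invoke the one-dimensional Gagliardo--Nirenberg inequality in the form $\int u^3\,dx\le C\|u_x\|_{L^2}^{4/3}\|u\|_{L^1}^{5/3}+C\|u\|_{L^1}^3$, and combine it with $V\le M$ and $\|u\|_{L^1}\le M$ to get $aV\int u^3\le C M^{8/3}\|u_x\|_{L^2}^{4/3}+CM^4$. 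Because the exponent $4/3$ is strictly below $2$, Young's inequality absorbs this into the dissipation: $aV\int u^3\le \tfrac12\|u_x\|_{L^2}^2+C(a,M)$. The remaining term is harmless, $\alpha V\int uf\le \alpha M\|f\|_{L^\infty}\|u\|_{L^1}\le \alpha M^2\|f\|_{L^\infty}$. Hence
\begin{equation*}
\tfrac{d}{dt}\|u\|_{L^2}^2\le -2\|u\|_{L^2}^2+C_\ast ,
\end{equation*}
and Gronwall gives the uniform bound $\|u(\cdot,t)\|_{L^2}^2\le\max\{\|u_0\|_{L^2}^2,\,C_\ast\}$. This is the step I expect to be decisive, and it is exactly here that the dimension enters: in $2D$ the analogous Gagliardo--Nirenberg exponent on $\|\nabla u\|_{L^2}$ is no longer subcritical, consistent with the paper only claiming possibly growing bounds there.

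With the uniform $L^2$ bound in hand, the rest is bootstrapping powered by diffusion. For $H^1$, testing against $-u_{xx}$ produces the genuinely nonlinear contribution $2aV\int u\,u_x^2\le 2aM\|u\|_{L^\infty}\|u_x\|_{L^2}^2$ (the term involving $f$ is lower order); inserting $\|u\|_{L^\infty}\le C\|u\|_{L^2}^{1/2}\|u_x\|_{L^2}^{1/2}+C\|u\|_{L^2}$ with the uniform $L^2$ bound, together with $\|u_x\|_{L^2}^2\le\|u_{xx}\|_{L^2}\|u\|_{L^2}$, lets Young absorb everything into $\|u_{xx}\|_{L^2}^2$, leaving $\frac{d}{dt}\|u_x\|_{L^2}^2\le -\|u_x\|_{L^2}^2+C$. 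This gives a uniform $H^1$ bound, hence a uniform $L^\infty$ bound via $H^1\hookrightarrow L^\infty$ in $1D$. For general $s$, the reaction term is handled exactly as in the a priori estimate of the preceding theorem (where all Sobolev norms were controlled by $\|u\|_{L^\infty}$), the difference being that the diffusion now supplies an extra dissipation $-\|u\|_{H^{s+1}}^2$; interpolating $\|u\|_{H^s}^2\le\epsilon\|u\|_{H^{s+1}}^2+C_\epsilon\|u\|_{L^2}^2$ against it yields $\frac{d}{dt}\|u\|_{H^s}^2\le -c\|u\|_{H^s}^2+C$ and therefore a uniform bound for every $s$. Combined with local well-posedness and the continuation criterion, this delivers global smoothness and the stated uniform Sobolev bounds.

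The main obstacle is the $L^2$ step, precisely because the reaction term is superlinear. What saves it is twofold: the mass constraint keeping both $\|u\|_{L^1}$ and $V=M-U$ uniformly bounded by $M$, and the subcritical one-dimensional scaling $\int u^3\lesssim\|u_x\|_{L^2}^{4/3}\|u\|_{L^1}^{5/3}$, which lets the linear diffusion dominate the cubic reaction production. Once this is in place, the higher-order estimates are routine parabolic bootstrapping.
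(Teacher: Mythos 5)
Your proposal is correct and follows essentially the same route as the paper: the decisive step in both is the uniform $L^2$ bound obtained by testing with $u$, applying the one-dimensional Gagliardo--Nirenberg inequality $\|u\|_{L^3}\lesssim \|u\|_{L^1}^{5/9}\|u_x\|_{L^2}^{4/9}$ (with $\|u\|_{L^1}\le M$ and $M-U\le M$), and absorbing the resulting subcritical power $\|u_x\|_{L^2}^{4/3}$ into the dissipation via Young's inequality. The only cosmetic difference is in the bootstrap: you pass through $H^1$ and $L^\infty$ and then reuse the no-diffusion estimate with the extra dissipation, while the paper interpolates the $H^s$ nonlinearity directly between $L^2$ and $H^{s+1}$; both are routine absorption arguments yielding the same uniform-in-time bounds.
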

\begin{proof}
First we show that $||u||_2$ is bounded:
multiplying both sides by $u$ and integrating, we have
\begin{equation*}
 \int u \frac{\partial u}{\partial t}dx=\int u(au^2+ f(x))\left(M-U(t)\right)dx-\int u^2dx +\int u  \frac{\partial ^2 u}{\partial x^2}dx.
\end{equation*}
Therefore,
\begin{equation}\label{eqn:diff_bound}
    \frac{1}{2}\partial_t \int u^2dx\leq M \int a u^3 dx -\int u^2dx- \int (u_x)^2dx + M^2||f||_\infty.
\end{equation}
Using Gagliardo-Nirenberg-Sobolev inequality (see eg.\cite{doering1995applied}) gives:
\begin{equation}\label{eqn:u_3}
    ||u||_{L^3}\leq C ||u||_{L^1}^{5/9}||u_x||_{L^2}^{4/9}.
\end{equation}
Substituting \eqref{eqn:u_3} into \eqref{eqn:diff_bound} yields (note that the constant C changes from line to line and may depend on $M$):
\begin{equation}\label{eqn:1d_l2}
\begin{split}
        \frac{1}{2}\partial_t \int u^2dx & \leq C\left(\int u dx\right)^{5/3}\left(\int u_x^2 dx\right)^{2/3} -\int u^2dx- \int u_x^2dx +M^2||f||_\infty\\
        & \leq  \frac{C}{3}\left(\int u dx\right)^{5} + \frac{2}{3} \left(\int u_x^2 dx\right) -\int u^2dx- \int u_x^2dx +M^2||f||_\infty \\
        & \leq -\frac{1}{3} \left(\int u_x^2 dx\right)- \int u^2dx +M^2||f||_\infty+C (M).
\end{split}
\end{equation}
Note that in the second inequality above, we used Young's inequality $ab\leq \frac{a^p}{p}+\frac{b^q}{q}$ with $p=3,q=\frac{3}{2}$. The calculation
\eqref{eqn:1d_l2} implies that $||u||_{L^2}$ is globally bounded by Gronwall inequality.
Moreover, from \eqref{eqn:1d_l2}, we can see that $||u||_{2}$ is in fact uniformly bounded by $M^2||f||_\infty+C(M)$ since if $||u||_2$ ever crosses this value for the first time at $t_0$, $\partial_t||u||^2$ becomes negative, which implies that before $t_0$, $||u||_2$ lies above $M^2||f||_\infty+C M^5$ already. We arrive at a contradiction. Therefore, $||u||_2$ is uniformly bounded for all time.
\\
Next, we estimate the higher order Sobolev norms. In fact, in 1D, this can be done using just control of the $L_1$ norm, but we are going to use $L_2$ norm for convenience as we have shown it remains bounded.
Multiplying \eqref{eqn:full_system} by $(-\Delta)^s u$ and integrating, we get
\begin{equation}
    \partial_t ||u||_{H^s}^2 \leq C\underbrace{\left|\int u^2 \cdot (-\Delta)^s udx \right|}_{I}-\underbrace{\int u (-\Delta)^s u dx}_{||u||_{H^s}^2} -||u||_{H^{s+1}}^2+\alpha M \left|\int f (-\Delta)^s u dx\right|.
\end{equation}
The last term can by controlled by $\alpha M ||f||_{H^{2s}} ||u||_{L^2}$ (moving all derivatives to $f$). Term $I$ can be represented by a sum of integrals of the type $\int D^l u D^{s-l} u D^s u dx $, where $l=0,\dots, s.$ In 1D, $D=\partial_x$. Then with H{\"o}lder's inequality and Gagliardo–Nirenberg interpolation inequality with dimension $d=1$, we can bound them by:
\begin{equation}
\begin{split}
        \int D^l u D^{s-l} u D^s u dx & \leq C ||D^l u||_p ||D^{s-1}u||_q||D^s u||_2 \hspace{1mm}\text{with} \hspace{1mm} \frac{1}{p}+\frac{1}{q}+\frac{1}{2}=1\\
        &\leq C||u||_{H^{s+1}}^{2-\frac{3}{2(s+1)}}||u||_{L^2}^{1+\frac{3}{2(s+1)}}.
\end{split}
\end{equation}
Here we deploy the Gagliardo Nirenberg inequalities $||D^su||_{L^2}=||u||_{H^s}\leq ||u||_{L^2}^{\frac{1}{s+1}} ||u||_{H^{s+1}}^{\frac{s}{s+
1}}$, $||D^l u||_p\leq C ||u||_{L^2}^{1-\alpha}||D^{s+1}u||_{L^2}^{\alpha}$, with $\alpha =\frac{l-\frac{1}{p}+\frac{1}{2}}{s+1}$, and $||D^{s-l}u||_q\leq C||u||_{L^2}^{1-\beta}||D^{s+1}u||_{L^2}^\beta$, with $\beta=\frac{s-l -\frac{1}{q}+\frac{1}{2}}{s+1}$.
Substituting gives (note that the constant $C$ changes from  line to line):
\begin{equation}
\begin{split}
     \partial_t ||u||_{H^s}^2 &\leq C||u||_{H^{s+1}}^{2-\frac
     {3}{2(s+1)}}||u||_{L^2}^{1+\frac{3}{2(s+1)}}-||u||_{H^s}^2-||u||_{H^{s+1}}^2+\alpha M ||f||_{H^{2s}} ||u||_{L^2}\\
     &\leq \frac{4s+1}{4s+4}||u||_{H^{s+1}}^2-||u||_{H^{s+1}}^2+\frac{3 C }{4s+4} ||u||_{L^2}^{\frac{4s+10}{3}}-||u||_{H^s}^2+\alpha M ||f||_{H^{2s}} ||u||_{L^2}\\ &\leq -||u||_{H^s}^2+ \frac{3 C }{4s+4} ||u||_{L^2}^{\frac{4s+10}{3}}+\alpha M ||f||_{H^{2s}} ||u||_{L^2},\label{aux1111}
     \end{split}
\end{equation}
where we use Young's inequality $ab\leq\frac{a^p}{p}+\frac{b^q}{q}$ with $p=\frac{4s+4}{4s+1}$, and $q=\frac{4s+4}{3}$.
Given that we proved $\|u\|_2$ is bounded uniformly in time, \eqref{aux1111} implies that
 $||u||_{H^s}$ is also bounded uniformly for all time.
\end{proof}

\subsection{With Diffusion in 2D}
The equation that we are interested in is given by:
\begin{equation}\label{eqn:diff_2d}
    \partial _t u =a u^2 (M-U(t))+\alpha f(x)(M-U(t))-u+\Delta u, U(t)=\int_{\mathbf{T}^d} u dx.
\end{equation}
\begin{theorem}
Suppose $u$ is a non-negative solution to \eqref{eqn:diff_2d} with dimension $d=2$ and periodic boundary condition. Let $a,\alpha,M$ be constant parameters, and $f(x)$ a smooth function. If $u_0(x)$ is a smooth initial profile, then $u(x,t)$ stays smooth for any finite time, that is, Sobolev norms $||u||_{H^s}$ with $s>0$ are bounded for any time {\color{black} $0 \leq t <\infty$}. The bound on the Sobolev norms may now depend on time.
\end{theorem}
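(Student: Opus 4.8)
The plan is to combine a continuation criterion in high Sobolev norms with a careful treatment of the critical quadratic nonlinearity. The central point is that this nonlinearity is controlled not by the (merely bounded) saturation factor $M-U(t)$ alone, but by exploiting the feedback between the growth of $\|u\|_{L^2}$ and the depletion of the substrate mass.

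First I would record the structural facts. Non-negativity of $u$ is preserved by the maximum principle, since the source $\alpha f(x)(M-U(t))$ is non-negative and the quadratic term vanishes where $u=0$. Integrating \eqref{eqn:diff_2d} over $\mathbf{T}^2$ gives
\[ \dot U(t) = (M-U(t))\Big(a\|u\|_{L^2}^2 + \alpha\!\int f\Big) - U(t), \]
from which $0\le U(t)\le M$ for all time, so that the spatially constant factor $V(t):=M-U(t)$ satisfies $0\le V(t)\le M$. As in the two preceding theorems, pairing \eqref{eqn:diff_2d} with $(-\Delta)^s u$ and using H\"older and Gagliardo--Nirenberg in $d=2$ yields
\[ \partial_t \|u\|_{H^s}^2 \le (C\|u\|_{L^\infty}-1)\|u\|_{H^s}^2 - \|u\|_{H^{s+1}}^2 + \alpha M\|f\|_{H^{2s}}\|u\|_{L^2}, \]
so by Gr\"onwall it suffices to bound $\int_0^T \|u\|_{L^\infty}\,dt$ together with $\|u\|_{L^2}$ on every finite interval; all Sobolev norms are then finite for $t<\infty$, with constants permitted to grow in $T$.

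The heart of the matter is therefore a finite-time bound on $\|u\|_{L^2}$, after which an Alikakos-type $L^p$ iteration (or De Giorgi--Nash--Moser) upgrades this to $\int_0^T\|u\|_{L^\infty}<\infty$. Testing \eqref{eqn:diff_2d} against $u$ gives
\[ \tfrac12\partial_t\|u\|_{L^2}^2 + \|\nabla u\|_{L^2}^2 + \|u\|_{L^2}^2 = aV\!\int u^3 + \alpha V\!\int f u, \]
and the cubic term is exactly $L^2$-critical in two dimensions: Ladyzhenskaya's inequality $\|u\|_{L^4}^2\le C\|u\|_{L^2}\|u\|_{H^1}$ gives $aV\int u^3 \le CaV\|u\|_{L^2}^2\|\nabla u\|_{L^2}+\dots$, and after Young's inequality one is left with a term of size $a^2(V\|u\|_{L^2}^2)^2$. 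Bounding $V\le M$ here would only produce a Riccati inequality $\dot y\lesssim y^2$, which cannot by itself rule out finite-time blow up; this is the genuine two-dimensional difficulty and the reason the conclusion is stated only for finite times with possibly growing bounds.

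What saves the estimate is the self-limiting coupling encoded in the substrate equation. Integrating the identity for $\dot U$ in time yields the a priori bound $\int_0^T V(t)\|u\|_{L^2}^2\,dt\le C(T)$, and differentially one has $\dot V \le -aV\|u\|_{L^2}^2 + M$, so that whenever the critical quantity $Z(t):=V(t)\|u(t)\|_{L^2}^2$ becomes large the substrate factor $V$ is driven rapidly toward zero, switching off the production term. I would use precisely this feedback, i.e. the coupled inequalities for $y=\|u\|_{L^2}^2$ and for $V$, to show that $Z$ stays bounded on $[0,T]$; with $Z$ controlled the dangerous contribution becomes $a^2 Z^2 + aZ\,y^{1/2}$, which is only sublinear in $y$, and Gr\"onwall then closes a finite-time bound on $\|u\|_{L^2}$. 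I expect this quantitative step, turning the qualitative saturation $V\to 0$ into a pointwise-in-time bound on $V\|u\|_{L^2}^2$, to be the main obstacle: it is exactly where the mass constraint $U\le M$, rather than mere boundedness of the nonlinearity, is indispensable.
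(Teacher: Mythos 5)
Your proposal correctly identifies the key structural fact of this problem---integrating the equation in space and time gives the a priori bound $\int_0^T (M-U(t))\|u\|_{L^2}^2\,dt \le C(T)$ (this is exactly the paper's estimate \eqref{eqn:timebound_L2})---but it does not close the argument. Your plan hinges on upgrading this \emph{time-integrated} bound to a \emph{pointwise-in-time} bound on $Z(t)=V(t)\|u(t)\|_{L^2}^2$, and you yourself flag this step as ``the main obstacle'' without proving it. That is a genuine gap, not a technicality: if you write the differential inequality for $Z$, you get $\dot Z \le \dot V y + V\dot y \le -aVy^2 + My + Ca^2V^3y^2 + \dots$ (with $y=\|u\|_{L^2}^2$), and the good term $-aVy^2$ only dominates the bad term $Ca^2V^3y^2 \le Ca^2M^2\,Vy^2$ when $CaM^2<1$, i.e.\ in a small-parameter regime; for general $a,M$ the coupled-inequality feedback you sketch does not obviously close. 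On top of this, your route requires an additional Alikakos/Moser iteration to convert the $L^2$ bound into $\int_0^T\|u\|_{L^\infty}\,dt<\infty$, since your $H^s$ energy inequality carries the coefficient $\|u\|_{L^\infty}$.

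The irony is that you were one algebraic observation away: the pointwise bound on $Z$ is unnecessary, because the integrated bound can be used \emph{directly as a Gr\"onwall coefficient}. At the $L^2$ level, your dangerous term satisfies $a^2Z^2 = a^2 Z\cdot Vy \le a^2 M\,Z(t)\,y$, so $\dot y \le Ca^2M\,Z(t)\,y + \text{l.o.t.}$ with $Z\in L^1([0,T])$, and Gr\"onwall closes. The paper does the same thing one level up, bypassing $L^2$ and $L^\infty$ bounds entirely: it estimates the nonlinear term in the $H^s$ energy identity by
\begin{equation*}
a(M-U)\left|\int u^2(-\Delta)^s u\,dx\right| \le C a (M-U)\,\|u\|_{L^2}\|u\|_{H^s}\|u\|_{H^{s+1}}
\le \tfrac12\|u\|_{H^{s+1}}^2 + CMa^2 (M-U)\|u\|_{L^2}^2\,\|u\|_{H^s}^2,
\end{equation*}
using Gagliardo--Nirenberg in $d=2$ and $(M-U)^2\le M(M-U)$. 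The resulting Gr\"onwall coefficient $CMa^2(M-U(t))\|u\|_{L^2}^2$ is, up to a constant, exactly the integrand in \eqref{eqn:timebound_L2}, so $\|u\|_{H^s}$ is bounded on $[0,T]$ with a constant growing in $T$---which is precisely why the theorem only claims time-dependent bounds. To repair your proposal, replace the conjectured pointwise bound on $Z$ by this direct use of the integrated bound, either at the $L^2$ level (then bootstrap) or, more efficiently, at the $H^s$ level as the paper does.
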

\begin{proof}
First we derive an a-priori estimate. In the 2D case, the analog of the estimate \eqref{eqn:1d_l2} is not available, as the exponents do not
allow to control $\|u\|_{L^2}$ uniformly in time in this way. Therefore, we need a more nuanced argument.
Note that
\begin{equation}
   \int_0^T\int  \partial_t u dx dt = \int_0^T\int  \left(a u^2 (M-U(t))+\alpha f(x)(M-U(t))-u+\Delta u\right) dx dt
\end{equation}
gives (in the following calculations, the constant $C$ changes from line to line):
\begin{equation}\label{eqn:timebound_L2}
\int_0^T\int  \left(a u^2 (M-U(t)) \right )dx dt \leq U(T)+ MT.
\end{equation}
Then multiplying \eqref{eqn:diff_2d} by $(-\Delta)^s u$ and integrating in $x$, we obtain
\begin{equation}\label{eqn:global_2D}
    \partial_t ||u||_{H^s}^2\leq -||u||_{H^{s+1}}^2 -||u||^2_{H^s}+a(M-\int u dx)\underbrace{\int u^2(-\Delta)^s u dx}_{I}+\alpha M ||f||_{H^s} ||u||_{H^s}.
\end{equation}
The integral $I$ is a sum of terms of the form: $\int D^l u D^{s-l} u D^s u dx $, and $l=0,\dots, s.$ We estimate it as follows:
\begin{equation}
\begin{split}
    \int D^l u D^{s-l} u D^s u dx & \leq C ||u||_{H^s} ||D^l u||_p ||D^{s-l}||_q \hspace{1mm}\text{with}\hspace{1mm} \frac{1}{p}+\frac{1}{q}+\frac{1}{2}=1\\
    &\leq C ||u||_{H^s}||u||_{H^{s+1}}^{\alpha}||u||_{L^2}^{1-\alpha}||u||_{H^{s+1}}^{\beta}||u||_{L^2}^{1-\beta}\\
    &\leq C||u||_{L^2}||u||_{H^s}||u||_{H^{s+1}} .\\
\end{split}
\end{equation}
Here we use Gagliardo Nirenberg inequalities for $n=2$: $||D^l u||_p\leq C |u||_{H^{s+1}}^{\alpha}||u||_{L^2}^{1-\alpha}$ with $\alpha=\frac{l+1-\frac{2}{p}}{s+1}$, and $||D^{s-l}||_q \leq C||u||_{H^{s+1}}^{\beta}||u||_{L^2}^{1-\beta}$ with $\beta=\frac{s-l+1-\frac{2}{q}}{s+1}$, $\alpha+\beta=1$.
Substituting these estimates back into \eqref{eqn:global_2D} gives:
\begin{equation}
\begin{split}
\partial_t||u||_{H^s}^2 & \leq -||u||_{H^{s+1}}^2 -||u||^2_{H^s}+C a\left(M-U(t)\right)||u||_{L^2}||u||_{H^s}||u||_{H^{s+1}}+\alpha M ||f||_{H^s} ||u||_{H^s} \\
& \leq CMa^2\left(M-U(t)\right)||u||^2_{L^2}||u||^2_{H^s}+\frac{1}{2}||u||^2_{H^{s+1}}-||u||_{H^{s+1}}^2 -||u||^2_{H^s}+\alpha M ||f||_{H^s} ||u||_{H^s}\\
& \leq CMa^2\left(M-U(t)\right)||u||^2_{L^2}||u||^2_{H^s}-\frac{1}{2}||u||^2_{H^{s+1}}+\alpha M ||f||_{H^s} ||u||_{H^s},
\end{split}
\end{equation}
where in the second line, we used the inequality: $2ab\leq \epsilon a^2+\frac{1}{\epsilon}b^2$. Then by Gronwall inequality, we get
\begin{equation*}
||u||_{H^s} \leq \exp\left(CMa\int_0^T\int  au^2\left(M-U(t)\right) dxdt\right) \left(||u(0)||_{H^s}  +\int_0^T \alpha M ||f||_{H^s} dt\right).
\end{equation*}
From \eqref{eqn:timebound_L2}, we see that $||u||_{H^s}$ is bounded for any finite $t\leq T <\infty$.
\end{proof}

{\bf Acknowledgement}. \rm AK has been partially supported by the NSF-DMS award 2006372. The authors thank Daniel Lew for patiently teaching them some of the relevant biology (all inadequacies are our fault) and helpful discussions.
We are grateful to anonymous referees for useful comments that helped improve the manuscript.

{\bf Disclosure of interest}. \rm The authors report no conflict of interest.

\end{document}